\documentclass[a4paper,12pt,leqno]{amsart}
\usepackage[active]{srcltx}
\usepackage{verbatim}
\usepackage{epsfig,graphicx,color,mathrsfs}
\usepackage{graphicx}
\usepackage{amsmath,amssymb,amsthm,amsfonts}
\usepackage{amssymb}
\usepackage[english]{babel}

\usepackage[left=2.7cm,right=2.7cm,top=3cm,bottom=3cm]{geometry}




\newcommand{\R}{{\mathbb R}}





\numberwithin{equation}{section}
\newtheorem{theorem}{Theorem}[section]
\newtheorem{proposition}[theorem]{Proposition}
\newtheorem{lemma}[theorem]{Lemma}

\newtheorem{remark}[theorem]{Remark}
\theoremstyle{definition}


\newcommand{\brm}{\begin{remark}\rm}
\newcommand{\erm}{\end{remark}}
\newcommand{\brms}{\begin{remark}\rm}
\newcommand{\erms}{\end{remark}}
\newcommand{\bte}{\begin{theorem}}
\newcommand{\ete}{\end{theorem}}
\newcommand{\bpr}{\begin{proposition}}
\newcommand{\epr}{\end{proposition}}
\newcommand{\ble}{\begin{lemma}}
\newcommand{\ele}{\end{lemma}}
\newcommand{\beq}{\begin{equation}}
\newcommand{\eeq}{\end{equation}}
\newcommand{\bdm}{\begin{displaymath}}
\newcommand{\edm}{\end{displaymath}}
\numberwithin{equation}{section}

\newcommand{\bos}{\begin{remark}\rm}
\newcommand{\eos}{\end{remark}}

\newcommand{\ben}{\begin{enumerate}}
\newcommand{\een}{\end{enumerate}}


\newcommand{\e }{\varepsilon }

\newcommand{\be}{\begin{equation}}
\newcommand{\ee}{\end{equation}}

\thanks{
AC is partially supported by the
Italian PRIN Research Project 2009: {\em Metodi Variazionali e Topologici
nello Studio di Fenomeni non Lineari}. \\
BS is partially supported by the
Italian PRIN Research Project 2009: {\em Metodi Variazionali e Topologici
nello Studio di Fenomeni non Lineari}, and is also partially supported by  ERC-2011-grant: \emph{Epsilon}.\\
}
\begin{document}

\title[Uniqueness]{A uniqueness result  for some singular semilinear elliptic equations}
\author {Annamaria Canino* and Berardino Sciunzi*}

\date{\today}

\address{* Dipartimento di Matematica e Informatica, UNICAL,
Ponte Pietro  Bucci 31B, 87036 Arcavacata di Rende, Cosenza, Italy.}

\email{canino@mat.unical.it, sciunzi@mat.unical.it}

\keywords{Singular semilinear elliptic equations, Comparison principles, Uniqueness of the solutions}

\subjclass[2010]{35J75,35B51,35B06}


\begin{abstract}
Given $\Omega$ a bounded open subset   of $\R^N$,
we consider nonnegative solutions to the singular semilinear elliptic equation $-\Delta\,u\,=\,\frac{f}{u^{\beta}}$ in $H^1_{loc}(\Omega)$,  under zero Dirichlet boundary conditions.  For  $\beta>0$ and  $f\in L^1(\Omega)$, we prove that the solution is unique.
\end{abstract}

\maketitle

\date{\today}



\maketitle

\section{introduction}

Let $\beta >0$ and and let $\Omega$ be a bounded open subset of $ \mathbb{R}^n$. We consider  $u\in H^1_{loc}(\Omega)$ weak solution to:
\begin{equation}\label{problemjumping}
\begin{cases}\displaystyle
-\Delta\,u\,=\,\frac{f}{u^{\beta}}  & \text{in $\Omega$,}  \\
u> 0 & \text{in $\Omega$,}  \\
\end{cases}
\end{equation}
where we assume that $f\in L^1(\Omega)$ and the assumption $u>0$ in $\Omega$ means that, for any compact set $K\subset \Omega$, we have that $$\underset{K}{\text{ess inf}}\, u>0.$$ The equation in \eqref{problemjumping} has to be understood in the weak distributional meaning, namely:
\begin{equation}\label{eq:fgakghfhksajdgfja}
\int_\Omega\,\nabla u\,  \nabla\varphi\,dx  = \int_\Omega \frac{f}{u^\beta}\varphi\,dx  \qquad\forall \varphi\in C^1_c(\Omega)\,.
\end{equation}
We study the uniqueness of the solution, when  zero Dirichlet boundary conditions are imposed, according to the following:
\begin{remark}\label{rediri}
Since, in general, the solution $u$ is not continuous up to the boundary and is not in $H^1_0(\Omega)$, we need to specify the meaning of the Dirichlet boundary condition.
In fact, following \cite{CanDeg}, we say
that $u\leqslant 0$ on $\partial\Omega$ if, for every $\e>0$, it follows that
\[
(u-\e)^+\in H^1_0(\Omega)\,.
\]
We will say that $u= 0$ on $\partial\Omega$ if $u$ is nonnegative and $u\leqslant 0$ on $\partial\Omega$.
\end{remark}
\noindent The study of singular semilinear problems was started in the pioneering work \cite{crandall} and
it is worth mentioning the contributions in \cite{boccardo,C23,CanDeg,CGS,gatica,saccon,kaw,lair,LM,stuart}. One of the main difficulties in this issue is given by the fact that, in general, the solution  is not in $H^1_0(\Omega)$. In particular it has been shown in \cite{LM}
that the solution cannot belongs to $H^1_0(\Omega)$ if $\beta\geqslant 3$.\\

The existence of a solution in our case, namely considering  \eqref{problemjumping} and imposing zero Dirchlet boundary conditions according to Remark \ref{rediri}, follows by the results in \cite{boccardo} where 
 $f$ is a nonnegative function such that
$f \in L^1(\Omega)$ if $\beta \geq 1$ while, if $0<\beta<1$, the further assumption
$f \in L^m(\Omega)$  with $m = \frac{2N}{N + 2 + \beta(N-2)}$ is required.
The solution found in \cite{boccardo} is obtained as the limit of a sequence $u_n$ given by the solutions to the truncated regularized problem:
\begin{equation}\label{problemjumpingdkshfs}
\begin{cases}\displaystyle
-\Delta\,u_n\,=\,\frac{f_n}{\Big(u_n+\frac{1}{n}\Big)^{\beta}}  & \text{in $\Omega$,}  \\
u= 0 & \text{on $\partial\Omega$,}\,.  \\
\end{cases}
\end{equation}
It is proved in \cite{boccardo} that the approximating solutions $u_n$ are uniformly bounded away from zero in the interior of the domain, and this allows to pass to the limit thus proving the existence of a solution to \eqref{problemjumping} such that:
\begin{equation}\label{fjnbklwqhrHL}
u\in H^1_{loc}(\Omega) \quad\text{and }\quad u^q\in H^1_0(\Omega)\quad\text{for}\quad q\,:=\,\max\{1,\frac{\beta+1}{2}\}\,.
\end{equation}
Note that in this case the solution is
strictly bounded away from zero on compact sets of $\Omega$, by construction.\\

It follows that the solution found in \cite{boccardo} has zero Dirichlet boundary condition in the meaning of
Remark \ref{rediri}. This is obvious if $\beta\leqslant 1$. If else $\beta>1$ this follows exploiting the fact that
$u^q\in H^1_0(\Omega)$ for $q\,:=\,\max\{1,\frac{\beta+1}{2}\}$ and observing that $u\geqslant \e$ on the support of
$(u-\e)^+$. A detailed proof of this fact will be provided in the proof of  Theorem \ref{UcasoB}.\\

\noindent Even though the uniqueness of the solution is expected because of the fact that the nonlinearity is decreasing with respect to
the variable $u$, there are no general result in the literature  under our general assumptions. This is manly caused by the lack of regularity of the solutions up to the boundary. We refer to \cite{CanDeg,crandall} for the case
$f=1$.\\

\noindent On the other hand, it is  standard to prove the uniqueness of the solution  when
nondecreasing locally Lipschitz continuous (non singular) nonlinearities are considered. It is also not hard to prove a uniqueness result, for singular elliptic equations, in the space $H^1_0(\Omega)$. This is quite a well known result, anyway we will provide a short and simple proof for the reader's convenience in Theorem \ref{ordinaryuniqueness}.
Let us emphasize the fact that, by  Theorem \ref{ordinaryuniqueness}, it follows  the uniqueness of the solution to \eqref{problemjumping} in the case $0<\beta<1$. In this case case in fact, assuming that $f \in L^m(\Omega)$  with $m = \frac{2N}{N + 2 + \beta(N-2)}$, the solutions found in \cite{boccardo} are in $H^1_0(\Omega)$
and this is a space naturally associated to the problem.\\

\noindent The crucial point here is the fact that, as already remarked, the solutions in general are not in $H^1_0(\Omega)$. This is the motivation for which, in the case $\beta>1$, the uniqueness of the solution has not been already proved.\\

\noindent Our main result is the following:
\begin{theorem}\label{main}
Let $\beta>1$ and let $f\in L^1(\Omega)$ be  non-negative. Then, under zero Dirichlet boundary conditions, the solution to \eqref{problemjumping} is unique.
\end{theorem}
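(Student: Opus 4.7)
The plan is to show that if $u_1$ and $u_2$ are two solutions to \eqref{problemjumping}, then for every $\varepsilon>0$ one has $u_1\le u_2+\varepsilon$ a.e.\ in $\Omega$. Sending $\varepsilon\to 0^+$ and then exchanging the roles of $u_1$ and $u_2$ yields $u_1=u_2$, as required. Concretely, I want to prove that the non-negative function
\[
w_\varepsilon := (u_1-u_2-\varepsilon)^+
\]
vanishes identically for each fixed $\varepsilon>0$.

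\medskip

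The first step is to show that $w_\varepsilon\in H^1_0(\Omega)$. Since $u_2\ge 0$ a.e., one has the pointwise bound $0\le w_\varepsilon\le (u_1-\varepsilon)^+$, and by Remark \ref{rediri} the majorant $(u_1-\varepsilon)^+$ belongs to $H^1_0(\Omega)$. The membership $w_\varepsilon\in H^1_0(\Omega)$ then follows from the capacity-theoretic characterization of $H^1_0$: the quasi-continuous representative of an $H^1_0$ function vanishes $H^1$-quasi-everywhere on $\partial\Omega$, and this property is inherited by any non-negative $H^1(\Omega)$ function dominated by it. For this argument to apply one must first know that $w_\varepsilon\in H^1(\Omega)$ (and not merely $H^1_{loc}$), i.e.\ that $\nabla u_1-\nabla u_2$ is in $L^2$ of the set $\{u_1>u_2+\varepsilon\}$; this is the delicate technical point, discussed below.

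\medskip

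The second step is a Kato-type argument. Setting $G:=u_1-u_2-\varepsilon$, subtracting the two equations \eqref{eq:fgakghfhksajdgfja} gives
\[
-\Delta G = f\Bigl(\frac{1}{u_1^\beta}-\frac{1}{u_2^\beta}\Bigr) \quad\text{in}\ \mathcal{D}'(\Omega);
\]
because the $u_i$ are locally bounded away from $0$, the right-hand side belongs to $L^1_{loc}(\Omega)$, and so Kato's inequality yields
\[
-\Delta w_\varepsilon \le \chi_{\{G>0\}}(-\Delta G)\le 0 \quad\text{in}\ \mathcal{D}'(\Omega),
\]
the last inequality because on $\{G>0\}=\{u_1>u_2+\varepsilon\}$ one has $u_1>u_2>0$ and $f\ge 0$, hence $1/u_1^\beta\le 1/u_2^\beta$. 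Thus $w_\varepsilon\in H^1_0(\Omega)$ is weakly sub-harmonic, and the standard maximum principle (obtained by testing the distributional inequality against $w_\varepsilon$ itself, after approximating by non-negative $C^\infty_c$ functions) forces $\int_\Omega|\nabla w_\varepsilon|^2\,dx\le 0$ and hence $w_\varepsilon\equiv 0$.

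\medskip

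The main technical obstacle is the $H^1$-regularity of $w_\varepsilon$ flagged in the first step. An a priori gradient estimate of the form
\[
\int_{\{u_i>\eta\}}|\nabla u_i|^2\,dx\le C_\beta\,\eta^{1-\beta}\,\|f\|_{L^1(\Omega)},\qquad\eta>0,
\]
obtained by using $\min\{(u_i-\eta)^+,k\}$ as test function (approximated in $H^1_0(\Omega)\cap L^\infty(\Omega)$ by compactly supported smooth ones) together with the pointwise bound $(u_i-\eta)^+/u_i^\beta\le C_\beta \eta^{1-\beta}$ valid for $\beta>1$, controls $\nabla u_1$ on the set $\{u_1>u_2+\varepsilon\}\subset\{u_1>\varepsilon\}$; however, it does not control $\nabla u_2$ there, since that set may intersect $\{u_2\ll\varepsilon\}$. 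I expect to handle this by inserting an additional cutoff $\psi_\delta(u_2)$ in the test function and carefully letting $\delta\to 0^+$, the cross terms involving $\nabla\psi_\delta(u_2)$ being absorbed thanks to the sign of the right-hand side. Once this step is settled, the Kato inequality together with the maximum principle in $H^1_0(\Omega)$ closes the argument in a very robust way.
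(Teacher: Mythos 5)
Your overall strategy (show $(u_1-u_2-\varepsilon)^+\equiv 0$, send $\varepsilon\to 0$, symmetrize) is the right one, and the Kato/maximum-principle step at the end is fine \emph{once} $w_\varepsilon:=(u_1-u_2-\varepsilon)^+$ is known to lie in $H^1_0(\Omega)$. But the point you yourself flag as ``the delicate technical point'' is precisely where the proof is missing, and your proposed repair does not close it. The set $\{u_1>u_2+\varepsilon\}$ is contained in $\{u_1>\varepsilon\}$, so $\nabla u_1$ is square-integrable there (this already follows from $(u_1-\varepsilon)^+\in H^1_0(\Omega)$; your a priori estimate adds nothing new), but that set can reach into regions where $u_2$ is arbitrarily small, and there one has \emph{no} control whatsoever on $\nabla u_2$, since $u_2$ is only assumed to be in $H^1_{loc}(\Omega)$. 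Inserting a cutoff $\psi_\delta(u_2)$ into the test function produces the cross term $\int_\Omega \psi_\delta'(u_2)\,T_\tau(w_\varepsilon)\,\nabla(u_1-u_2)\cdot\nabla u_2\,dx$, and the factor $\nabla(u_1-u_2)\cdot\nabla u_2=\nabla u_1\cdot\nabla u_2-|\nabla u_2|^2$ has no sign; it is not ``absorbed thanks to the sign of the right-hand side''. So the argument, as written, does not establish $w_\varepsilon\in H^1(\Omega)$, and without that neither the capacity argument for $H^1_0$ membership nor the final energy estimate is justified.

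The paper resolves exactly this obstruction by \emph{not} comparing $u_1$ with $u_2$ directly. It replaces the supersolution $u_2$ by the minimizer $w$ of a truncated functional $J_k$ over the convex set $\{\varphi\in H^1_0(\Omega): 0\le\varphi\le u_2\}$. This $w$ is in $H^1_0(\Omega)$ by construction, satisfies $w\le u_2$, and (Lemma 2.1) is a supersolution of the truncated equation $-\Delta z=-\Phi_k'(z)$ against nonnegative test functions. Then $(u_1-w-\varepsilon)^+=\bigl((u_1-\varepsilon)^+-w\bigr)^+$ is the positive part of a difference of two $H^1_0(\Omega)$ functions, hence automatically in $H^1_0(\Omega)$, and the comparison $u_1\le w+\varepsilon\le u_2+\varepsilon$ follows by testing with truncations $T_\tau\bigl((u_1-w-\varepsilon)^+\bigr)$ and using that $-\Phi_k'$ is nonincreasing (choosing $\varepsilon^{-\beta}<k$ so that the truncation is inactive on $\{u_1>\varepsilon\}$). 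Some auxiliary device of this kind --- an $H^1_0$ substitute for the supersolution --- appears to be unavoidable, and it is the idea your proposal lacks.
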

We will prove Theorem \ref{main} exploiting the variational approach introduced in \cite{CanDeg}. Actually
Theorem \ref{main} will be a consequence of a more general \emph{weak comparison principle}, see Theorem \ref{comparison}.\\

\noindent As a corollary of our result, we will obtain the proof that the solution found in \cite{boccardo}
is the only one solution in the class of solutions fulfilling \eqref{fjnbklwqhrHL}. Furthermore, such a solution also
fulfils the Dirichlet boundary condition in the meaning of
Remark \ref{rediri}, and is unique also in this class. More precisely we will prove the following:

\begin{theorem}\label{UcasoB}
The solution to \eqref{problemjumping} is unique in the class of functions fulfilling \eqref{fjnbklwqhrHL}. More precisely, for $\beta>1$ and  $f\in L^1(\Omega)$ non-negative, if $u$ and $v$ are two solutions of  \eqref{problemjumping} fulfilling \eqref{fjnbklwqhrHL}, then it follows that $u\equiv v$ a.e. in  $\Omega$.
\end{theorem}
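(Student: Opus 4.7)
The plan is to reduce Theorem \ref{UcasoB} to Theorem \ref{main}. Theorem \ref{main} already gives uniqueness of solutions satisfying the Dirichlet condition in the sense of Remark \ref{rediri}, namely $(u-\varepsilon)^+\in H^1_0(\Omega)$ for every $\varepsilon>0$. So it suffices to show that any solution $u$ belonging to the class \eqref{fjnbklwqhrHL} automatically satisfies this boundary condition; then the conclusion $u\equiv v$ follows by applying Theorem \ref{main} to $u$ and $v$.

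The core technical step is therefore: given $u\in H^1_{loc}(\Omega)$ with $u\geqslant 0$ and $u^q\in H^1_0(\Omega)$ for $q=(\beta+1)/2>1$, prove that $(u-\varepsilon)^+\in H^1_0(\Omega)$ for every $\varepsilon>0$. Set $w:=u^q\in H^1_0(\Omega)$ and define
\[
\phi:[0,\infty)\to[0,\infty),\qquad \phi(s)\,:=\,\bigl(s^{1/q}-\varepsilon\bigr)^+.
\]
Since $1/q<1$, the derivative $\phi'(s)=\frac{1}{q}s^{1/q-1}$ is decreasing on $(\varepsilon^q,\infty)$, so
\[
|\phi'(s)|\,\leqslant\,\tfrac{1}{q}\,\varepsilon^{1-q}\qquad\text{for a.e. }s>0,
\]
showing that $\phi$ is Lipschitz on $[0,\infty)$ with $\phi(0)=0$. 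By the standard Stampacchia-type result on composition of Lipschitz functions with $H^1_0$ functions, $\phi(w)\in H^1_0(\Omega)$. On the other hand, since $u\geqslant 0$, one has $\phi(w)=((u^q)^{1/q}-\varepsilon)^+=(u-\varepsilon)^+$ a.e. in $\Omega$, which gives exactly $(u-\varepsilon)^+\in H^1_0(\Omega)$.

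Once this is established, the argument concludes quickly: both $u$ and $v$ solve \eqref{problemjumping} and satisfy zero Dirichlet boundary conditions in the sense of Remark \ref{rediri}, so Theorem \ref{main} applies and gives $u\equiv v$ a.e. in $\Omega$. I expect the only subtle point to be the chain-rule/composition step, which must be handled a little carefully because $\phi$ is merely Lipschitz and not $C^1$; but the boundedness of $\phi'$ away from $s=0$, combined with $\phi(0)=0$, makes the standard truncation/approximation argument go through without complication. No additional structural difficulty is anticipated, since the singularity of $f/u^\beta$ does not enter this part of the proof at all — all the analytic work on the singularity is absorbed into Theorem \ref{main}.
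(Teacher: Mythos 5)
Your proposal is correct and follows essentially the same route as the paper: reduce to Theorem \ref{main} by showing that $u^q\in H^1_0(\Omega)$ with $q=\frac{\beta+1}{2}>1$ forces $(u-\e)^+\in H^1_0(\Omega)$, the key point in both arguments being that $s\mapsto(s^{1/q}-\e)^+$ has derivative bounded by $\frac{1}{q}\e^{1-q}$. The only difference is cosmetic: you invoke the Lipschitz chain rule for $H^1_0$ compositions directly, whereas the paper runs the equivalent explicit approximation with $\psi_n=(\varphi_n^{2/(\beta+1)}-\e)^+$ uniformly bounded in $H^1_0(\Omega)$.
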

Combining Theorem \ref{main} and Theorem \ref{UcasoB}, we will have the quoted uniqueness result.\\

\noindent We will conclude the paper pointing out a first simple consequence of our uniqueness result. It follows in fact by uniqueness that, if the domain $\Omega$ and the datum $f$ have some symmetry, then the solution inherits the symmetry. This will be precisely stated and proved in Theorem \ref{Symmetry}. It follows in particular that, if the domain is a ball or an annulus, then the solution is unique and radial.\\

The paper is organized as follows: In Section \ref{kduhfkuer} we prove Theorem \ref{comparison} and then we exploit it to prove Theorem \ref{main}. As a consequence we deduce Theorem \ref{UcasoB} and Theorem \ref{Symmetry}.
In Section \ref{dfssfjkgdfsg} we prove  Theorem \ref{ordinaryuniqueness}.

\section{Proof of  Theorem \ref{main}}\label{kduhfkuer}
Let us start defining the real valued function $g_{k}(s)$ by
\begin{equation}\nonumber
g_{k}(s)\,:=\,
\begin{cases}
\max\{-s^{-\beta}\,,\,-k\}\quad\text{if}\quad s>0,\\
-k\qquad\qquad\quad\qquad\text{if}\quad s\leqslant 0\,.\\
\end{cases}
\end{equation}
Then we consider the real valued function $\Phi_{k}(s)$ defined by the conditions
\begin{equation}\nonumber
\begin{cases}
\Phi_{k}'(s)\,=\,g_{k}(s),\\
\Phi_{k}(1)=0\,,\\
\end{cases}
\end{equation}
namely we consider the primitive of $g_{k}(s)$ that is equal to zero for $s=1$.\\

Let us consequently consider the functional $J_k\,:H^1_0(\Omega)\rightarrow [-\infty\,,\,+\infty]$ defined by
\begin{equation}\nonumber
J_k(\varphi)\,:=\,\frac{1}{2} \int_\Omega\,|\nabla\,\varphi|^2\,dx+\int_\Omega f\cdot\Phi_{k}(\varphi)\,dx\qquad\varphi\in H^1_0(\Omega)\,.
\end{equation}
In the following we will exploit the fact that $J_k(\varphi)\in\mathbb{R}$ if we assume that $\varphi$ is in the positive cone.
Let $w$ be defined as the minimum of $J_k$ on the convex set
\[
\mathcal K\,:=\,\{\varphi\in H^1_0(\Omega)\,:\, 0\leqslant \varphi\leqslant v\,\,\text{a.e. in}\,\,\Omega\}\,.
\]
By \cite{stampacchia} it follows that
\begin{equation}\nonumber
 \int_\Omega\,\nabla w\nabla (\psi-w)\,dx\geqslant -\int_\Omega\,f\cdot \Phi_{k}'(w)(\psi-w)\quad \text{for}\,\,
 \psi\in w+\left(H^1_0(\Omega)\cap L^\infty_c(\Omega)\right)\,\,\text{and}\,\,0\leqslant \psi\leqslant v\,.
\end{equation}
\begin{lemma}\label{lemmause}
We have that
\begin{equation}\label{ksjfskgfdfjigf}
 \int_\Omega\,\nabla w\nabla \psi\,dx\geqslant -\int_\Omega\,f\cdot \Phi_{k}'(w)\psi\quad \text{for}\,\,
 \psi\in C^\infty_c(\Omega)\qquad\text{with}\quad \psi\geqslant 0\,\,\text{a.e. in}\,\,\Omega\,.
\end{equation}
\end{lemma}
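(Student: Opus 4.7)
The plan is to test the Stampacchia inequality with the projection $\psi_t := \min(w + t\tilde\psi,\, v)$, where $\tilde\psi \in C^\infty_c(\Omega)$ is arbitrary with $\tilde\psi \geq 0$ and $t > 0$. Setting $\eta_t := (w + t\tilde\psi - v)^+$, one has $\psi_t - w = t\tilde\psi - \eta_t$, which lies in $H^1_0(\Omega) \cap L^\infty_c(\Omega)$ (its support is contained in $\mathrm{supp}\,\tilde\psi$), and $0 \leq \psi_t \leq v$ since $w \geq 0$, so $\psi_t$ is admissible in the displayed variational inequality. Substituting and dividing by $t$ yields
\[
\int_\Omega \nabla w \nabla \tilde\psi\,dx + \int_\Omega f\,\Phi_k'(w)\,\tilde\psi\,dx \;\geq\; \frac{1}{t}\int_\Omega \nabla w \nabla \eta_t\,dx + \frac{1}{t}\int_\Omega f\,\Phi_k'(w)\,\eta_t\,dx,
\]
and the task reduces to proving that the $\liminf$ as $t \to 0^+$ of the right-hand side is $\geq 0$.

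The decisive move is to bring in the equation for $v$. Since $\eta_t \in H^1_0(\Omega) \cap L^\infty_c(\Omega)$ has support contained in $\mathrm{supp}\,\tilde\psi \Subset \Omega$, where $v \geq c > 0$, a standard density argument allows $\eta_t$ to be used as a test function in \eqref{eq:fgakghfhksajdgfja}, giving $\int \nabla v \nabla \eta_t = \int (f/v^\beta)\eta_t$. Decomposing $\int \nabla w \nabla \eta_t = \int \nabla v \nabla \eta_t + \int \nabla(w-v) \nabla \eta_t$ and observing that on $A_t := \{w + t\tilde\psi > v\}$ one has $\nabla \eta_t = \nabla(w-v) + t \nabla \tilde\psi$ a.e., the right-hand side of the last display rewrites as
\[
\frac{1}{t}\int_\Omega f\!\left[\tfrac{1}{v^\beta} + \Phi_k'(w)\right]\!\eta_t\,dx \;+\; \frac{1}{t}\int_{A_t} |\nabla(w-v)|^2\,dx \;+\; \int_{A_t}(\nabla w - \nabla v)\nabla \tilde\psi\,dx.
\]

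Now pass to the limit $t \to 0^+$. Because $w \leq v$, the sets $A_t$ decrease, as $t \downarrow 0$, to $\{w = v,\ \tilde\psi > 0\}$ up to a null set, and $\nabla w = \nabla v$ a.e.\ on $\{w = v\}$. The middle term is $\geq 0$. The last term vanishes in the limit by dominated convergence, with dominating function $|(\nabla w - \nabla v)\nabla \tilde\psi| \in L^1(\Omega)$ and pointwise limit $0$. For the first term, $0 \leq \eta_t/t \leq \tilde\psi$ and $\eta_t/t \to \tilde\psi \chi_{\{w=v\}}$ a.e.; on $\mathrm{supp}\,\tilde\psi$ the bracket is bounded (since $v \geq c > 0$ and $|\Phi_k'(\cdot)| \leq k$), so dominated convergence yields convergence to $\int_{\{w=v\}} f\,\tilde\psi\,[v^{-\beta} - \min\{v^{-\beta}, k\}]\,dx \geq 0$. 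Combining the three contributions gives $\liminf_{t\to 0^+}\,\text{RHS} \geq 0$, and hence \eqref{ksjfskgfdfjigf}.

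The main obstacle is that the bracket $1/v^\beta + \Phi_k'(w)$ has no a priori sign: since $w \leq v$ forces $\Phi_k'(w) \leq \Phi_k'(v) \leq 0$, on the support of $\eta_t$ this bracket is typically negative, so naively dominating the $\eta_t$-terms on the Stampacchia side does not produce the correct sign. The use of $v$'s equation both generates the favorable non-negative quadratic $\int_{A_t}|\nabla(w-v)|^2$ and effectively restricts, in the limit, the contribution of the bracket to the set $\{w = v\}$, where $w^{-\beta} = v^{-\beta}$ makes the bracket non-negative. This rearrangement is the crucial step that renders the limiting argument successful.
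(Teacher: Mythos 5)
Your proof is correct and follows essentially the same route as the paper's: both test the Stampacchia variational inequality with the projection $\min\{w+t\tilde\psi,\,v\}$ and invoke the (super)solution property of $v$ on the overshoot set $\{w+t\tilde\psi>v\}$, where the projected competitor coincides with $v$, to obtain the favorable sign before letting $t\to0^+$. Two minor remarks: since $v$ is only a supersolution in the application, the identity $\int_\Omega\nabla v\nabla\eta_t\,dx=\int_\Omega (f/v^\beta)\eta_t\,dx$ should be the inequality $\geqslant$, which is precisely the direction you need; and your explicit identification of the limit set $\{w=v\}$, where $\nabla w=\nabla v$ a.e.\ and the bracket $v^{-\beta}+\Phi_k'(v)\geqslant 0$, replaces the paper's use of the monotonicity of $\Phi_k'$ together with the cutoff $\theta(w/k)$ and its final limit $k\to\infty$.
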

\begin{proof}
To prove this let us consider $\theta\in C^\infty_c(\mathbb{R})$ with $0\leqslant \theta\leqslant 1$ for $t\in\mathbb{R}$, $\theta (t)=1$
for $t\in[-1,1]$ and $\theta (t)=0$
for $t\in(-\infty,-2]\cup[2,\infty)$. Then, for any $\omega\in C^\infty_c(\Omega)$ with $\omega\geqslant 0$ in $\Omega$,
we set
\[
\omega_k\,:=\,\theta(\frac{w}{k})\,\omega,\qquad\quad \omega_{k,t}\,:=\,\min\{w+t\omega_k\,,\,v\}\,,
\]
with $k\geqslant 1$ and $t>0$.
We have that
$\omega_{k,t}\in w+\left(H^1_0(\Omega)\cap L^\infty_c(\Omega)\right)$ and $w\leqslant \omega_{k,t}\leqslant v$, so that

\begin{equation}\nonumber
 \int_\Omega\,\nabla w\nabla (\omega_{k,t}-w)\,dx\geqslant -\int_\Omega\,f\cdot \Phi_{k}'(w)(\omega_{k,t}-w)\,.
\end{equation}
Consequently
\begin{equation}\nonumber
\begin{split}
 &\int_\Omega\,|\nabla (\omega_{k,t}-w)|^2
 +f\cdot(\Phi_{k}'(\omega_{k,t})-\Phi_{k}'(w))(\omega_{k,t}-w)\,dx\\
 &\leqslant
 \int_\Omega\,\nabla \omega_{k,t}\nabla (\omega_{k,t}-w)\,+\,
 f\cdot\Phi_{k}'(\omega_{k,t})(\omega_{k,t}-w)\,dx\\
 &=\int_\Omega\,\nabla \omega_{k,t}\nabla (\omega_{k,t}-w-t\omega_k)\,+\,
 f\cdot\Phi_{k}'(\omega_{k,t})(\omega_{k,t}-w-t\omega_k)\,dx\\
 &+t\int_\Omega \nabla \omega_{k,t}\nabla \omega_{k}+f\cdot\Phi_{k}'(\omega_{k,t})\omega_k\,dx\\
 &=\int_\Omega\,\nabla v\nabla (\omega_{k,t}-w-t\omega_k)\,+\,
 f\cdot\Phi_{k}'(v)(\omega_{k,t}-w-t\omega_k)\,dx\\
 &+t\int_\Omega \nabla \omega_{k,t}\nabla \omega_{k}+f\cdot\Phi_{k}'(\omega_{k,t})\omega_k\,dx\,.
 \end{split}
\end{equation}
Note now that, by the definition of $\Phi_{k}$, it follows that $v$ is also a supersolution to the equation
$-\Delta z=-\Phi_{k}'(z)$, so that, observing that $\omega_{k,t}-w- t\omega_k\leqslant 0$, we deduce
\begin{equation}\nonumber
\begin{split}
 &\int_\Omega\,|\nabla (\omega_{k,t}-w)|^2
 +f\cdot(\Phi_{k}'(\omega_{k,t})-\Phi_{k}'(w))(\omega_{k,t}-w)\,dx\\
 &\leqslant
 t\int_\Omega \nabla \omega_{k,t}\nabla \omega_{k}+f\cdot\Phi_{k}'(\omega_{k,t})\omega_k\,dx\,.
 \end{split}
\end{equation}
Exploiting again the fact  that $\omega_{k,t}-w\leqslant t\omega_k$, by simple computations we deduce that
\begin{equation}\nonumber
\begin{split}
\int_\Omega \nabla \omega_{k,t}\nabla \omega_{k}+f\cdot\Phi_{k}'(\omega_{k,t})\omega_k\,dx\geqslant -\int_\Omega
 f\cdot|\Phi_{k}'(\omega_{k,t})-\Phi_{k}'(w)||\omega_{k}|\,dx\\
 \end{split}
\end{equation}
We can pass to the limit for $t\rightarrow 0$ exploiting also the Lebesgue Theorem obtaining
\begin{equation}\nonumber
\begin{split}
\int_\Omega \nabla w\nabla \omega_{k}+f\cdot\Phi_{k}'(w)\omega_k\,dx\geqslant 0\,.
 \end{split}
\end{equation}
The claim, namely the proof of \eqref{ksjfskgfdfjigf}, follows letting $k$ tend to infinity.\\
\end{proof}

Now we are in position to prove our \emph{weak comparison principle}, namely we have the following:
\begin{theorem}\label{comparison}
Let $\beta>1$ and let $f\in L^1(\Omega)$ be  non-negative. Let $u\in H^1_{loc}(\Omega)$ be a subsolution to  \eqref{problemjumping} such that $u\leq 0$ on $\partial\Omega$ and let $v\in H^1_{loc}(\Omega)$ be a supersolution to
\eqref{problemjumping}. Then, $u\leqslant v$ a.e. in $\Omega$.
\end{theorem}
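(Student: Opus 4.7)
I would compare $u$ with the obstacle minimizer $w$ constructed just before Lemma \ref{lemmause}. Since $w$ belongs to the convex set $\mathcal K$ we automatically have $w \le v$ a.e., so it suffices to show $u \le w$ a.e.\ in $\Omega$. Rewriting Lemma \ref{lemmause} using the identity $-\Phi_k'(s) = \min\{s^{-\beta},k\}$ for $s>0$, we may read it as
$$
\int_\Omega \nabla w \cdot \nabla \psi\,dx \;\ge\; \int_\Omega f\,\min\{w^{-\beta},k\}\,\psi\,dx
\qquad \forall\,\psi \in C^\infty_c(\Omega),\,\psi\ge 0.
$$

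For $\varepsilon > 0$, my intended test function in both the subsolution inequality for $u$ and the displayed inequality for $w$ is
$$
\psi_\varepsilon := (u-w-\varepsilon)^+.
$$
Because $w \ge 0$ in $\Omega$ and $(u-\varepsilon)^+ \in H^1_0(\Omega)$ by the boundary condition for $u$, we get $0 \le \psi_\varepsilon \le (u-\varepsilon)^+$, so $\psi_\varepsilon \in H^1_0(\Omega)$. On the support $\{u > w+\varepsilon\}$ one has $u>\varepsilon$, so $f/u^\beta$ is dominated by the integrable function $f/\varepsilon^\beta$. Subtracting the two inequalities after plugging in $\psi_\varepsilon$ yields
$$
\int_\Omega |\nabla \psi_\varepsilon|^2\,dx \;\le\; \int_\Omega f\Bigl(u^{-\beta} - \min\{w^{-\beta},k\}\Bigr)\,\psi_\varepsilon\,dx.
$$
The key step is then to pick $k \ge \varepsilon^{-\beta}$ and verify that the integrand on the right is nonpositive pointwise on $\{u>w+\varepsilon\}$: if $w^{-\beta}\le k$ then $\min\{w^{-\beta},k\}=w^{-\beta}\ge u^{-\beta}$ since $u>w$, while if $w^{-\beta}>k$ then $\min\{w^{-\beta},k\}=k\ge \varepsilon^{-\beta}\ge u^{-\beta}$ since $u>\varepsilon$. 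Hence $\nabla\psi_\varepsilon\equiv 0$, forcing $\psi_\varepsilon\equiv 0$ in $\Omega$ and thus $u\le w+\varepsilon$ a.e. Letting $\varepsilon\to 0^+$ gives $u\le w\le v$ a.e., as required.

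The main technical obstacle is justifying the substitution of $\psi_\varepsilon$ into the two weak inequalities, since the subsolution inequality for $u$ is stated only against $C^1_c(\Omega)$ test functions and Lemma \ref{lemmause} only against $C^\infty_c(\Omega)$ test functions, whereas $\psi_\varepsilon$ lies merely in $H^1_0(\Omega)$ and a priori has no compact support. I would handle this by a cut-off/truncation approximation $\psi^{n,M}_\varepsilon := \eta_n\,\min\{\psi_\varepsilon, M\}$, where $\{\eta_n\}\subset C^\infty_c(\Omega)$ is a sequence of nonnegative cutoffs increasing monotonically to $1$ on compact subsets of $\Omega$; each $\psi^{n,M}_\varepsilon$ is in $H^1_0\cap L^\infty$ with compact support and, by standard density, may be approximated in $H^1$ norm by nonnegative $C^\infty_c$ functions, hence is admissible in both inequalities. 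The limit $n\to\infty$ is legitimate because $(u-\varepsilon)^+ \in H^1_0(\Omega)$ implies $\nabla u\in L^2$ on the set $\{u>\varepsilon\}$ which contains the support of $\psi_\varepsilon$, while on that support $f/u^\beta\le f/\varepsilon^\beta$ and $f\min\{w^{-\beta},k\}\le kf$ are dominated by integrable functions; the limit $M\to\infty$ is handled by monotone convergence. This validates the inequality for $\psi_\varepsilon$ itself, and the comparison then closes as outlined above.
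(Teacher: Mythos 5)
Your proposal is correct and follows essentially the same route as the paper: you compare $u$ with the obstacle minimizer $w\in\mathcal K$ (so $w\le v$ for free), test both Lemma \ref{lemmause} and the subsolution inequality with truncated, compactly supported approximations of $(u-w-\varepsilon)^+$, and use the choice $k\ge\varepsilon^{-\beta}$ to make the right-hand side pointwise nonpositive — your $\min\{w^{-\beta},k\}$ formulation is exactly the paper's $-\Phi_k'(w)$, and your $\eta_n\min\{\psi_\varepsilon,M\}$ plays the role of the paper's $T_\tau(\min\{(u-w-\varepsilon)^+,\varphi_n^+\})$. No substantive differences.
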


\begin{proof}
We start noticing that, since $w\in H^1_0(\Omega)$ with $w\geqslant 0$ a.e. in $\Omega$, exploiting also the fact that $u\leqslant 0$  on $\partial\Omega$ according to Remark \ref{rediri},
it follows  that
\[
(u-w-\e)^+\in H^1_0(\Omega)\,.
\]
Therefore, by \eqref{ksjfskgfdfjigf} and standard density arguments, it follows
\begin{equation}\label{eq111}
 \int_\Omega\,\nabla w\nabla T_\tau\left((u-w-\e)^+\right)\,dx\geqslant -\int_\Omega\,f\cdot \Phi_{k}'(w)T_\tau\left((u-w-\e)^+\right)\,dx\,
\end{equation}
for $T_\tau (s)\,:=\, \min\{s,\tau\}$ for $s\geqslant 0$ and $T_\tau (-s)\,:=\,-T_\tau (s)$ for $s< 0$.\\
Let now $\varphi_n\in C^\infty_c(\Omega)$ such that $\varphi_n\rightarrow (u-w-\e)^+$ in $H^1_0(\Omega)$ and set
\[
\tilde\varphi_{\tau,n}\,:=\,T_\tau (\min\{(u-w-\e)^+,\varphi_n^+\})\,.
\]
It follows that $\tilde\varphi_{\tau,n}\in H^1_0(\Omega)\cap L^\infty_c(\Omega)$ so that

\begin{equation}\nonumber
 \int_\Omega\,\nabla u\nabla \tilde\varphi_{\tau,n}\,dx\leqslant \int_\Omega\,\frac{f}{u^\beta}\,\tilde\varphi_{\tau,n}\,dx\,.
\end{equation}
Passing to the limit as $n$ tends to infinity, it is easy to deduce that
\begin{equation}\label{eq222}
 \int_\Omega\,\nabla u\nabla T_\tau\left((u-w-\e)^+\right)\,dx\leqslant \int_\Omega\frac{f}{u^\beta}T_\tau\left((u-w-\e)^+\right)\,dx\,.
\end{equation}
Now we take $\e>0$ such that $\e^{-\beta}<k$ and, by \eqref{eq111} and \eqref{eq222}, we deduce
\begin{equation}\nonumber
\begin{split}
 \int_\Omega\,|\nabla T_\tau\left((u-w-\e)^+\right)|^2\,dx
 &\leqslant
 \int_\Omega\,f \cdot\left(\frac{1}{u^\beta}+\Phi_{k}'(w)\right)
 T_\tau\left((u-w-\e)^+\right)\,dx\\
 &\leqslant
 \int_\Omega\,f \cdot\left(-\Phi_{k}'(u)+\Phi_{k}'(w)\right)
 T_\tau\left((u-w-\e)^+\right)\,dx\\
 &\leqslant 0\,.
 \end{split}
\end{equation}
By the arbitrariness  of $\tau>0$  we deduce that
\[
u\leqslant w+\e \leqslant v+\e\qquad \text{a.e. in}\quad \Omega
\]
and the thesis follows letting $\e\rightarrow 0$.

\end{proof}

As direct consequence of Theorem \ref{comparison} we will obtain the proof of Theorem \ref{main}. 
\begin{proof}[\underline{Proof of  Theorem \ref{main}}]
 If $u$ and $v$
are two solutions to \eqref{problemjumping} with zero Dirichlet boundary condition, then we have that
$u\leqslant v$ by Theorem \ref{comparison}. In the same way it follows that $v\leqslant u$, and the proof is done.
\end{proof}
Moreover, by Theorem \ref{main}, we can prove Theorem \ref{UcasoB}.
\begin{proof}[\underline{Proof of   Theorem \ref{UcasoB}}]
 We first claim that, if $u$ fulfills \eqref{fjnbklwqhrHL}, then, for every $\varepsilon>0$, it follows that
\[
(u-\e)^+\in H^1_0(\Omega)\,.
\]
This is obvious if $0<\beta\leqslant 1$. For $\beta>1$ let $\varphi_n\in C^1_c(\Omega)$ such that $\varphi_n$ converges to $u^{\frac{\beta+1}{2}}$ in $H^1_0(\Omega)$ and set
\[
\psi_n \,:=\, (\varphi_n^{\frac{2}{\beta+1}}-\e)^+\,.
\]
It follows that $\psi_n$ is uniformly bounded in $H^1_0(\Omega)$ and it converges a.e. to $(u-\e)^+$. Therefore we obtain that $(u-\e)^+\in H^1_0(\Omega)$ and the claim is proved. The proof of Theorem \ref{UcasoB} follows now  as a consequence of Theorem \ref{main}.
\end{proof}

We now deduce from the uniqueness of the solution a symmetry result. We have the following:
\begin{theorem}\label{Symmetry}
Let $u\in H^1_{loc}(\Omega)$ be the solution to \eqref{problemjumping} under zero Dirichlet boundary condition. Assume that the domain $\Omega$ is symmetric with respect to some hyperplane $T_\lambda^\nu\,:=\,\{x\cdot\nu=\lambda\}$, $\lambda\in\R$ and $\nu\in S^{N-1}$. Then, if $f$ is symmetric with respect to the hyperplane $T_\lambda^\nu$, then $u$ is symmetric with respect to the hyperplane $T_\lambda^\nu$ too. In particular, by rotation and translation invariance, we can consider the case $T_\lambda^\nu=T_0^{e_1}=\{x_1=0\}$. It follows in this case that, if $\Omega$ is symmetric in the $x_1$-direction and $f(x_1,x')=f(-x_1,x')$ (with $x'\in\R^{N-1}$), then
\[
u(x_1,x')=u(-x_1,x')\,\qquad\text{a.e. in}\quad \Omega.
\]
In particular, if $\Omega$ is a ball or an annulus (centered at the origin) and $f$ is radially symmetric, then $u$ is radially symmetric.
\end{theorem}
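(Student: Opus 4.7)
The plan is to reduce the symmetry claim to an application of the uniqueness result, Theorem \ref{main}. By rotation/translation invariance of the Laplacian we may assume $T_\lambda^\nu = T_0^{e_1} = \{x_1 = 0\}$, so $\Omega$ is invariant under the reflection $R(x_1,x')=(-x_1,x')$ and $f\circ R = f$ a.e. Define
\[
\tilde u(x_1,x') := u(-x_1,x'), \qquad x=(x_1,x')\in\Omega.
\]
The goal is to show that $\tilde u$ is also a nonnegative solution to \eqref{problemjumping} under zero Dirichlet boundary conditions (in the sense of Remark \ref{rediri}), and then invoke Theorem \ref{main} to conclude $\tilde u = u$ a.e., which is exactly the claimed symmetry.

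First I would check that $\tilde u$ inherits the structural properties of $u$. Since $R$ is a smooth isometry of $\Omega$, we have $\tilde u \in H^1_{loc}(\Omega)$ and $\mathrm{ess\,inf}_K \tilde u = \mathrm{ess\,inf}_{R(K)} u > 0$ for every compact $K\subset\Omega$, so $\tilde u > 0$ in the sense required in the introduction. To verify the equation, fix $\varphi\in C^1_c(\Omega)$ and apply the change of variables $y = R(x)$ in \eqref{eq:fgakghfhksajdgfja} written for $u$ and the test function $\varphi\circ R \in C^1_c(\Omega)$; using $|\det DR|=1$, $|\nabla(\varphi\circ R)(x)|^2 = |\nabla\varphi(R(x))|^2$ and the symmetry $f\circ R = f$, one obtains
\[
\int_\Omega \nabla \tilde u\cdot\nabla\varphi\, dx = \int_\Omega \frac{f}{\tilde u^\beta}\,\varphi\, dx,
\]
so $\tilde u$ is a weak solution of the same equation.

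Next I would verify the Dirichlet boundary condition for $\tilde u$ in the sense of Remark \ref{rediri}. Because $\Omega$ is invariant under $R$, the map $\varphi\mapsto \varphi\circ R$ is a linear isometry of $H^1_0(\Omega)$ onto itself (it preserves the $H^1$ norm and maps $C^\infty_c(\Omega)$ into itself). Hence, for every $\varepsilon>0$, from $(u-\varepsilon)^+\in H^1_0(\Omega)$ we deduce $(\tilde u - \varepsilon)^+ = \big((u-\varepsilon)^+\big)\circ R \in H^1_0(\Omega)$, which is exactly the meaning of $\tilde u \le 0$ on $\partial\Omega$; together with $\tilde u\ge 0$, this gives $\tilde u = 0$ on $\partial\Omega$.

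With $u$ and $\tilde u$ both solutions of \eqref{problemjumping} with zero boundary data, Theorem \ref{main} yields $u = \tilde u$ a.e.\ in $\Omega$, which is the stated hyperplane symmetry. For the ball/annulus case, $\Omega$ is invariant under every hyperplane through the origin, so $u$ is symmetric with respect to every such hyperplane, and a standard argument (any two points of the same radius are related by a finite composition of such reflections) forces $u$ to be radial. I expect the only delicate point to be the boundary condition for $\tilde u$, but this is handled cleanly by the reflection-invariance of $H^1_0(\Omega)$ for symmetric domains; the rest is change of variables and an application of uniqueness.
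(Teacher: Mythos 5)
Your proposal is correct and follows essentially the same route as the paper: reflect $u$ across the hyperplane, check that the reflected function is again a solution with zero Dirichlet boundary data in the sense of Remark \ref{rediri}, and conclude by the uniqueness of Theorem \ref{main}. The paper states these verifications without detail, so your additional checks (change of variables for the weak formulation, reflection-invariance of $H^1_0(\Omega)$ for the boundary condition, and the composition-of-reflections argument for radiality) are simply a fleshed-out version of the same argument.
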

\begin{proof}
By rotation and translation invariance, we may and we will assume that
$\Omega$ is symmetric in the $x_1$-direction and $f(x_1,x')=f(-x_1,x')$ (with $x'\in\R^{N-1}$).
Setting
\[
v(x_1,x')\,:=\, u(-x_1,x')\,,
\]
it follows that $v$ is a solution to \eqref{problemjumping} with zero Dirichlet boundary condition. By uniquenes, namely applying Theorem \ref{main}, it follows that $u=v$, that is
\[
u(x_1,x')=u(-x_1,x')\,\qquad\text{a.e. in}\quad \Omega\,,
\]
ending the proof.
\end{proof}

\section{Uniqueness in $H^1_0(\Omega)$}\label{dfssfjkgdfsg}
For the readers convenience we provide here a simple proof of the uniqueness of the solution in $H^1_0(\Omega)$.
We have the following:
\begin{theorem}\label{ordinaryuniqueness}
Let $\beta>0$ and let $f\in L^1(\Omega)$ be  non-negative. Then, under zero Dirichlet boundary conditions, the solution to \eqref{problemjumping}  in $H^1_0(\Omega)$ (if it exists) is unique in $H^1_0(\Omega)$.
\end{theorem}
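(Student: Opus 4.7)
My plan is to exploit the strict monotonicity of $s\mapsto s^{-\beta}$ on $(0,\infty)$: given two solutions $u,v\in H^1_0(\Omega)$, I subtract their weak formulations tested against a truncation of $(u-v)^+$ to derive
\[
\int_\Omega |\nabla T_k((u-v)^+)|^2\,dx \;=\; \int_\Omega f\bigl(u^{-\beta}-v^{-\beta}\bigr)\,T_k((u-v)^+)\,dx \;\leq\; 0,
\]
which forces $T_k((u-v)^+)\equiv 0$. Sending $k\to\infty$ yields $u\leq v$ a.e., and the symmetric argument gives $u=v$ a.e.

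To carry this out, set $w:=(u-v)^+\in H^1_0(\Omega)$ and fix $k>0$. Since $w$ may fail to be bounded, I work with $T_k(w):=\min(w,k)\in H^1_0(\Omega)\cap L^\infty(\Omega)$. First I would extend the weak formulation \eqref{eq:fgakghfhksajdgfja} from $\varphi\in C^1_c(\Omega)$ to $\varphi\in H^1_0(\Omega)\cap L^\infty_c(\Omega)$: on the (compact) support of such a $\varphi$ both $u$ and $v$ are uniformly bounded away from zero, hence $f/u^\beta,\,f/v^\beta\in L^1(\mathrm{supp}\,\varphi)$, and mollification together with dominated convergence yields the extension. Second, I would approximate $T_k(w)$ from below by compactly-supported functions: pick $\psi_n\in C^\infty_c(\Omega)$ with $\psi_n\to T_k(w)$ in $H^1$ and set $\chi_n:=\min(\psi_n^+,T_k(w))$. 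Exploiting that $(\,\cdot\,)^+$ and $\min(\,\cdot\,,T_k(w))$ are continuous on $H^1$, one verifies $\chi_n\in H^1_0(\Omega)\cap L^\infty_c(\Omega)$, $0\leq\chi_n\leq T_k(w)$, and $\chi_n\to T_k(w)$ in $H^1(\Omega)$.

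Testing the two equations against $\chi_n$ and subtracting gives
\[
\int_\Omega \nabla(u-v)\cdot\nabla\chi_n\,dx \;=\; \int_\Omega f\Bigl(\frac{1}{u^\beta}-\frac{1}{v^\beta}\Bigr)\chi_n\,dx.
\]
The left-hand side converges to $\int_\Omega |\nabla T_k(w)|^2\,dx\geq 0$, using $\nabla T_k(w)=\nabla(u-v)\,\chi_{\{0<u-v<k\}}$. On the right-hand side, because $\chi_n\leq T_k(w)$ is supported in $\{w>0\}=\{u>v\}$ (where moreover $v>0$ a.e., so $v^{-\beta}$ is finite a.e.), the integrand is pointwise nonpositive. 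Fatou's lemma applied to the nonnegative sequence $f(v^{-\beta}-u^{-\beta})\chi_n$ then yields
\[
\limsup_{n\to\infty}\int_\Omega f\bigl(u^{-\beta}-v^{-\beta}\bigr)\chi_n\,dx \;\leq\; \int_\Omega f\bigl(u^{-\beta}-v^{-\beta}\bigr)T_k(w)\,dx \;\leq\; 0.
\]
Combining produces $\int|\nabla T_k(w)|^2=0$, hence $T_k(w)\equiv 0$ by the Poincaré inequality; letting $k\to\infty$ gives $w=0$ a.e.\ and the conclusion.

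The subtle point, which I expect to be the main obstacle, is the integrability of the right-hand side as $n$ varies. For $\beta>1$ one cannot a priori assert $f/u^\beta\in L^1(\Omega)$, so the individual integrals $\int f u^{-\beta}\chi_n$ and $\int f v^{-\beta}\chi_n$ need not remain uniformly bounded; only their difference is controlled, through the sign of $u^{-\beta}-v^{-\beta}$ on $\mathrm{supp}\,\chi_n\subset\{u>v\}$. This is precisely why the compactly-supported approximation $\chi_n$ of $T_k(w)$ (rather than $T_k(w)$ itself) together with Fatou applied to the signed combined integrand is the right tool for passing to the limit.
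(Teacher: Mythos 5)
Your proposal is correct and follows essentially the same route as the paper: testing with $\min\{(u-v)^+,\varphi_n^+\}$ for smooth compactly supported approximants $\varphi_n$, exploiting that on such a compact support $u$ and $v$ are bounded away from zero and that $u^{-\beta}-v^{-\beta}\leq 0$ on $\{u>v\}$, and passing to the limit. The extra truncation $T_k$ and the Fatou step are harmless but superfluous, since $\min(\psi_n^+,\cdot)$ is already bounded and the sign of the integrand makes each right-hand integral nonpositive directly.
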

\begin{proof}
Let $\beta>0$ and let $f\in L^1(\Omega)$ be  non-negative and  consider $u,v\in H^1_0(\Omega)$ two solutions
to \eqref{problemjumping}. Let us show that $u\leqslant v$. \\

\noindent Observe that $(u-v)^+\in H^1_0(\Omega)$ and consider $\varphi_n\in C^\infty_c(\Omega)$ such that $\varphi_n$
converges to $(u-v)^+$ in $ H^1_0(\Omega)$. Set now
\[
\tilde\varphi_n\,:=\,\min\{(u-v)^+\,,\,\varphi_n^+\}\,.
\]
It is easy to verify that $\tilde\varphi_n\in H^1_0(\Omega)$ and has compact support in $\Omega$. Therefore, by using it as test function, we obtain:
\begin{equation}\nonumber
\begin{split}
 \int_\Omega\,
 \nabla (u-v)\nabla \tilde\varphi_n
 \,dx&=
 \int_\Omega\,f \cdot\left(\frac{1}{u^\beta}-\frac{1}{v^\beta}\right)
 \tilde\varphi_n\,dx\leqslant 0\,.
 \end{split}
\end{equation}
Passing to the limit, we have
\begin{equation}\nonumber
 \int_\Omega\,
 |\nabla (u-v)^+|^2
 \,dx\leqslant 0\,,
\end{equation}
that implies $u\leqslant v$ in $\Omega$. In the same way we can prove that $v\leqslant u$ in $\Omega$, namely $u= v$ in $\Omega$.
\end{proof}

\bigskip


\begin{thebibliography}{99}



\bibitem{boccardo} L.~Boccardo,   L.~Orsina,
\newblock Semilinear elliptic equations with singular nonlinearities.
\newblock  {\em Calc. Var. Partial Differential Equations}, \textbf{37}(3-4) (2010), 363--380.

\bibitem{BK} H.~Brezis, T.~Kato,
\newblock Remarks on the Schr\"{o}dinger operator with singular complex potentials.
\newblock {\em J. Math. Pures Appl.}, \textbf{58}(9) (1979), 137--151.



\bibitem{C23} A.~Canino,
\newblock Minimax methods for singular elliptic equations with an application to a jumping problem.
\newblock {\em J. Differential Equations}, \textbf{221} (2006), 210--223.



\bibitem{CanDeg} A.~Canino,  and M.~Degiovanni,
\newblock A variational approach to a class of singular semilinear
              elliptic equations.
\newblock  {\em J. Convex Anal.}, \textbf{11}(1) (2004), 147--162.


\bibitem{CGS} A. Canino, M. Grandinetti and B. Sciunzi,
\newblock Symmetry of solutions of some semilinear elliptic equations with singular nonlinearities.
\newblock  {\em J. Differential Equations}, \textbf{255}(12) (2013), 4437--4447.



\bibitem{crandall} M-G- Crandall, P.H. Rabinowitz and L. Tartar,
\newblock On a Dirichlet problemwith a singular nonlinearity,
\newblock  {\em Comm. P.D.E.}, \textbf{2}(2) (1977), 193--222.


\bibitem{gatica} J. A. Gatica,  V. Oliker and P. Waltman,
\newblock Singular nonlinear boundary value problems for second-order
              ordinary differential equations,
\newblock  {\em J. Differential Equations }, \textbf{79}(1) (1989), 62--78.

\bibitem{saccon} N. Hirano, C. Saccon and N. Shioji,
\newblock Multiple existence of positive solutions for singular elliptic problems with concave ad convex nonlinearities,
\newblock {\em Adv. Diff. Eq.}, \textbf{9}(1-2) (2004), 197--220.


\bibitem{kaw} B. Kawohl,
\newblock On a class of singular elliptic equations, in Progress in partial differential equations: elliptic and
              parabolic problems (Pont-\`{a}-Mousson, 1991)
\newblock {\em Pitman Res. Notes Math. Ser.}, \textbf{266}, Longman Sci. Tech., Harlow, 1992, 156--163.


\bibitem{stampacchia} D. Kinderlehrer and G. Stampacchia,
\newblock An Introduction to Variational Inequalities and Their
Applications, \emph{Pure and Applied Mathematics} 88, Academic Press, New York, London,
Toronto (1980).



\bibitem{lair} A.V. Lair and A.W. Shaker,
\newblock Classical and weak solutionsof a singular semilinear elliptic problem,
\newblock {\em J. Math. Anal. Appl.}, \textbf{211}(2) (1977), 371--385.

\bibitem{LM} A.C.~Lazer, P.J.~McKenna,
\newblock On a singular nonlinear elliptic boundary-value problem.
\newblock {\em Proc. Amer. Math. Soc.}, \textbf{111} (1991), 721--730.


\bibitem{stuart} C.A. Stuart,
\newblock Existence and approximation of solutions of non-linear elliptic equations.
\newblock {\em Math. Z.}, \textbf{147} (1976), 53--63.


\end{thebibliography}
\end{document}